\documentclass[11pt]{amsart}
%%%%%%%%%%%%%%%%%%%%%%%%%%%%%%%%%%%%%%%%%%%%%%%%%%%%%%%%%%%%%%%%%%%%%%%%%%%%%%%%%%%%%%%%%%%%%%%%%%%%%%%%%%%%%%%%%%%%%%%%%%%%%%%%%%%%%%%%%%%%%%%%%%%%%%%%%%%%%%%%%%%%%%%%%%%%%%%%%%%%%%%%%%%%%%%%%%%%%%%%%%%%%%%%%%%%%%%%%%%%%%%%%%%%%%%%%%%%%%%%%%%%%%%%%%%%
\usepackage{amsmath}
\usepackage[active]{srcltx}
\usepackage{t1enc}
\usepackage[latin2]{inputenc}
\usepackage{verbatim}
\usepackage{amsmath,amsfonts,amssymb,amsthm}
\usepackage[mathcal]{eucal}
\usepackage{enumerate}
\usepackage[centertags]{amsmath}
\usepackage{graphics}

\setcounter{MaxMatrixCols}{10}
%TCIDATA{OutputFilter=Latex.dll}
%TCIDATA{Version=5.50.0.2890}
%TCIDATA{<META NAME="SaveForMode" CONTENT="1">}
%TCIDATA{BibliographyScheme=Manual}
%TCIDATA{Created=Sat Apr 17 14:03:56 2010}
%TCIDATA{LastRevised=Tuesday, September 24, 2013 17:03:19}
%TCIDATA{<META NAME="GraphicsSave" CONTENT="32">}

\newtheorem{theorem}{Theorem}

\newtheorem*{OldTheorem}{Theorem A}
\newtheorem*{OldTheorem2}{Theorem B}
\newtheorem*{OldTheorem3}{Theorem C}
\newtheorem*{OldTheorem4}{Theorem D}
\newtheorem*{OldTheorem5}{Theorem E}

\begin{document}
\author{Gy\"orgy G\'at and Ushangi Goginava}
\title[ Almost Everywhere Strong Summability]{ Almost Everywhere Strong
Summability of Double Walsh-Fourier Series}
\address{G. G\'at, Institute of Mathematics and Computer Science, College of
Ny\'\i regyh\'aza, P.O. Box 166, Nyiregyh\'aza, H-4400 Hungary }
\email{gatgy@nyf.hu}
\address{U. Goginava, Department of Mathematics, Faculty of Exact and
Natural Sciences, Ivane Javakhishvili Tbilisi State University, Chavcha\-vadze str. 1, Tbilisi
0128, Georgia}
\email{zazagoginava@gmail.com}
\date{}
\maketitle

\begin{abstract}
In this paper we study the a. e. strong convergence of the quadratical
partial sums of the two-dimensional Walsh-Fourier series. Namely, we prove the a.e. relation
$(\frac{1}{n}\sum\limits_{m=0}^{n-1}\left\vert S_{mm}f - f \right\vert^{p})^{1/p}\rightarrow 0$
for every  two-dimensional functions belonging to $L\log L$ and $0<p\le 2$. From the theorem of Getsadze \cite{Gets} it follows that the space $L\log L$ can not be enlarged with preserving this strong summability property.
\end{abstract}

\medskip

\footnotetext{%
2010 Mathematics Subject Classification: 42C10
\par
Key words and phrases: two-dimensional Walsh system, strong Marcinkiewicz
means, a. e. convergence.
\par
Research was supported by project T\'AMOP-4.2.2.A-11/1/KONV-2012-0051 and by
Shota Rustaveli National Science Foundation grant no.31/48 (Operators in some function spaces and their applications in
Fourier analysis)}

\section{ Introduction}

Let $\mathbb{P}$ denote the set of positive integers, $\mathbb{N}\mathbf{:=}%
\mathbb{P}\mathbf{\cup \{}0\mathbf{\}.}$ Denote $\mathbb{Z}_{2}$ the
discrete cyclic group of order 2, that is $\mathbb{Z}_{2}=\{0,1\},$ where
the group operation is the modulo 2 addition and every subset is open. The
Haar measure on $\mathbb{Z}_{2}$ is given such that the measure of a
singleton is 1/2. Let $G$ be the complete direct product of the countable
infinite copies of the compact groups $\mathbb{Z}_{2}.$ The elements of $G$
are of the form $x=\left( x_{0},x_{1},...,x_{k},...\right) $ with $x_{k}\in
\{0,1\}\left( k\in \mathbb{N}\right) .$ The group operation on $G$ is the
coordinate-wise addition, the measure (denote\thinspace $\,$by$\,\,\mu $)
and the topology are the product measure and topology. The compact Abelian
group $G$ is called the Walsh group. A base for the neighborhoods of $G$ can
be given in the following way:
\begin{eqnarray*}
I_{0}\left( x\right) &:&=G,\,\,\,I_{n}\left( x\right) :=\,I_{n}\left(
x_{0},...,x_{n-1}\right) \\
&:&=\left\{ y\in G:\,y=\left( x_{0},...,x_{n-1},y_{n},y_{n+1},...\right)
\right\} ,
\end{eqnarray*}%
\begin{equation*}
\,\left( x\in G,n\in \mathbb{N}\right) .
\end{equation*}%
These sets are called the dyadic intervals. Let $0=\left( 0:i\in \mathbb{N}%
\right) \in G$ denote the null element of $G,\,\,\,I_{n}:=I_{n}\left(
0\right) \,\left( n\in \mathbb{N}\right) .$ Set $e_{n}:=\left(
0,...,0,1,0,...\right) \in G$ the $n\,$th\thinspace coordinate of which is 1
and the rest are zeros $\left( n\in \mathbb{N}\right) .$

For $k\in \mathbb{N}$ and $x\in G$ denote by
\begin{equation*}
r_{k}\left( x\right) :=\left( -1\right) ^{x_{k}}\,\,\,\,\,\,\left( x\in
G,k\in \mathbb{N}\right)
\end{equation*}%
the $k$-th Rademacher function. If $n\in \mathbb{N}$, then $%
n=\sum\limits_{i=0}^{\infty }n_{i}2^{i},$ where $n_{i}\in \{0,1\}\,\,\left(
i\in \mathbf{N}\right) $, i. e. $n$ is expressed in the number system of
base $2$. For $n>0$ denote $\left\vert n\right\vert :=\max \{j\in \mathbb{N} \mathbf{:} %
n_{j}\neq 0\}$, that is, $2^{\left\vert n\right\vert }\leq n<2^{\left\vert
n\right\vert +1}.$

The Walsh-Paley system is defined as the sequence of Walsh-Paley functions:
\begin{equation*}
w_{n}\left( x\right) :=\prod\limits_{k=0}^{\infty }\left( r_{k}\left(
x\right) \right) ^{n_{k}}= \left(
-1\right) ^{\sum\limits_{k=0}^{\left\vert n\right\vert}n_{k}x_{k}}\,\,\,\,\,\,\left( x\in G,n\in \mathbb{P}\right),
\end{equation*}
and $w_0:=1$. The Walsh-Dirichlet kernel is defined by
\begin{equation*}
D_{n}\left( x\right) =\sum\limits_{k=0}^{n-1}w_{k}\left( x\right) .
\end{equation*}

Recall that (see \cite{GES} and \cite{SWS})
\begin{equation}
D_{2^{n}}\left( x\right) =\left\{
\begin{array}{c}
2^{n},\mbox{if }x\in I_{n}, \\
0,\,\,\,\mbox{if }x\in \overline{I}_{n}.%
\end{array}%
\right.
\end{equation}

We consider the double system $\left\{ w_{n}(x)\times w_{m}(y):\,n,m\in
\mathbf{N}\right\} $ on the $G\times G.$ The notiation $a\lesssim b$ in the
whole paper stands for $a\leq c\cdot b$, where $c$ is an absolute constant.

The rectangular partial sums of the 2-dimensional Walsh-Fourier series are
defined as follows:

\begin{equation*}
S_{M,N}(x,y,f):=\sum\limits_{i=0}^{M-1}\sum\limits_{j=0}^{N-1}\widehat{f}%
\left( i,j\right) w_{i}\left( x\right) w_{j}\left( y\right) ,
\end{equation*}%
where the number
\begin{equation*}
\widehat{f}\left( i,j\right) =\int\limits_{G\times G}f\left( x,y\right)
w_{i}\left( x\right) w_{j}\left( y\right) d\mu \left( x,y\right)
\end{equation*}%
is said to be the $\left( i,j\right) $th Walsh-Fourier coefficient of the
function\thinspace $f.$

Denote
\begin{equation*}
S_{n}^{\left( 1\right) }\left( x,y,f\right) :=\sum\limits_{l=0}^{n-1}%
\widehat{f}\left( l,y\right) w_{l}\left( x\right) ,
\end{equation*}%
\begin{equation*}
S_{m}^{\left( 2\right) }\left( x,y,f\right) :=\sum\limits_{r=0}^{m-1}%
\widehat{f}\left( x,r\right) w_{r}\left( y\right) ,
\end{equation*}%
where
\begin{equation*}
\widehat{f}\left( l,y\right) =\int\limits_{G}f\left( x,y\right) w_{l}\left(
x\right) d\mu \left( x\right)
\end{equation*}%
and
\begin{equation*}
\widehat{f}\left( x,r\right) =\int\limits_{G}f\left( x,y\right) w_{r}\left(
y\right) d\mu \left( y\right) .
\end{equation*}

The norm (or quasinorm) of the space $L_{p}\left( G\times G\right) $ is
defined by
\begin{equation*}
\left\Vert f\right\Vert _{p}:=\left( \int\limits_{G\times G}\left\vert
f\left( x,y\right) \right\vert ^{p}d\mu \left( x,y\right) \right)
^{1/p}\,\,\,\,\left( 0<p<+\infty \right) .
\end{equation*}

We denote by $L\log L\left( G\times G\right) $ the class of measurable
functions $f$, with
\begin{equation*}
\int\limits_{G\times G}|f|\log ^{+}|f|<\infty ,
\end{equation*}%
where $\log ^{+}u:=\mathbb{I}_{(1,\infty )}\log u,$ where $\mathbb{I}_{E}$
is character function of the set $E$.

Denote by $S_{n}^{T}(x,f)$ the partial sums of the trigonometric Fourier
series of $f$ and let
\begin{equation*}
\sigma _{n}^{T}(x,f)=\frac{1}{n+1}\sum_{k=0}^{n}S_{k}^{T}(x,f)
\end{equation*}%
be the $(C,1)$ means. Fejér \cite{Fe} proved that $\sigma _{n}^{T}(f)$
converges to $f$ uniformly for any $2\pi $-periodic continuous function.
Lebesgue in \cite{Le} established almost everywhere convergence of $(C,1)$
means if $f\in L_{1}(\mathbb{T}),\mathbb{T}:=[-\pi ,\pi )$. The strong
summability problem, i.e. the convergence of the strong means
\begin{equation}
\frac{1}{n+1}\sum\limits_{k=0}^{n}\left\vert S_{k}^{T}\left( x,f\right)
-f\left( x\right) \right\vert ^{p},\quad x\in \mathbb{T},\quad p>0,
\label{Hp}
\end{equation}%
was first considered by Hardy and Littlewood in \cite{H-L}. They showed that
for any $f\in L_{r}(\mathbb{T})~\left( 1<r<\infty \right) $ the strong means
tend to $0$ a.e., if $n\rightarrow \infty $. The Fourier series of $f\in
L_{1}(\mathbb{T})$ is said to be $\left( H,p\right) $-summable at $x\in T$,
if the values (\ref{Hp}) converge to $0$ as $n\rightarrow \infty $. The $%
\left( H,p\right) $-summability problem in $L_{1}(\mathbb{T})$ has been
investigated by Marcinkiewicz \cite{Ma} for $p=2$, and later by Zygmund \cite%
{Zy2} for the general case $1\leq p<\infty $. Oskolkov in \cite{Os} proved
the following: Let $f\in L_{1}(\mathbb{T})$ and let $\Phi $ be a continuous
positive convex function on $[0,+\infty )$ with $\Phi \left( 0\right) =0$
and
\begin{equation}
\ln \Phi \left( t\right) =O\left( t/\ln \ln t\right) \text{ \ \ \ }\left(
t\rightarrow \infty \right) .  \label{a1}
\end{equation}%
Then for almost all $x$%
\begin{equation}
\lim\limits_{n\rightarrow \infty }\frac{1}{n+1}\sum\limits_{k=0}^{n}\Phi
\left( \left\vert S_{k}^{T}\left( x,f\right) -f\left( x\right) \right\vert
\right) =0.  \label{osk}
\end{equation}

It was noted in \cite{Os} that Totik announced the conjecture that (\ref{osk}%
) holds almost everywhere for any $f\in L_{1}(\mathbb{T})$, provided
\begin{equation}
\ln \Phi \left( t\right) =O\left( t\right) \quad \left( t\rightarrow \infty
\right) .  \label{a2}
\end{equation}%
In \cite{Ro} Rodin proved

\begin{OldTheorem}
Let $f\in L_{1}(\mathbb{T})$. Then for any $A>0$
\begin{equation*}
\lim\limits_{n\rightarrow \infty }\frac{1}{n+1}\sum\limits_{k=0}^{n}\left(
\exp \left( A\left\vert S_{k}^{T}\left( x,f\right) -f\left( x\right)
\right\vert \right) -1\right) =0
\end{equation*}%
for a. e. $x\in \mathbb{T}$.
\end{OldTheorem}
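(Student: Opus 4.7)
The plan is to combine a density argument with a weak-type maximal inequality. Since trigonometric polynomials are dense in $L_{1}(\mathbb{T})$ and every polynomial $P$ of degree $d$ satisfies $S_{k}^{T}(x,P)=P(x)$ once $k\ge d$, the expression inside the limit vanishes identically on the tail for such $P$. Splitting $f=P+h$ with $\|h\|_{1}$ arbitrarily small and using the elementary bound
\begin{equation*}
\exp(A|a+b|)-1\le \tfrac{1}{2}\bigl(\exp(2A|a|)-1\bigr)+\tfrac{1}{2}\bigl(\exp(2A|b|)-1\bigr),
\end{equation*}
the desired a.e. convergence will follow from the weak-type maximal estimate
\begin{equation*}
\mu\!\left\{x\in\mathbb{T}: \sup_{N}\frac{1}{N+1}\sum_{k=0}^{N}\bigl(\exp(A|S_{k}^{T}(x,h)-h(x)|)-1\bigr)>\lambda\right\}\le\frac{C_{A}\|h\|_{1}}{\lambda}.
\end{equation*}

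For this maximal inequality I would first establish a Marcinkiewicz--Zygmund-type moment bound, namely
\begin{equation*}
\left\|\left(\frac{1}{N+1}\sum_{k=0}^{N}|S_{k}^{T}h-h|^{q}\right)^{1/q}\right\|_{L^{r}}\lesssim \sqrt{q}\,\|h\|_{L^{r}},\qquad q\ge 2,
\end{equation*}
uniformly in $N$ for some fixed $r\in(1,\infty)$. Summing the Taylor expansion of $\exp(A\cdot)$ then yields an $L^{r}$ bound on the strong mean, and a standard sublinear-maximal-function argument promotes this to the supremum over $N$. To descend from $L^{r}$ to weak $L^{1}$, I would perform a Calder\'on--Zygmund decomposition of $h$ at height $\lambda$: the good part is handled by the $L^{2}$ case of the moment bound, while the bad part is controlled by observing that, modulo modulations, $S_{k}^{T}$ reduces to truncated Hilbert transforms, which are of weak type $(1,1)$ and behave well on functions with cancellation over dyadic intervals.

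The main obstacle is the $\sqrt{q}$-growth moment estimate, which is essentially a BMO/John--Nirenberg statement in the $k$-variable: it encodes that for $h\in L^{\infty}$ the family $\{S_{k}^{T}h-h\}_{k}$ has subgaussian averages with BMO norm controlled by $\|h\|_{\infty}$. The cleanest route I know goes through the representation of $S_{k}^{T}$ in terms of modulated Hilbert transforms together with the classical fact that the Hilbert transform maps $L^{\infty}$ into BMO, after which John--Nirenberg converts the BMO bound into the exponential integrability. The exponential tail supplied by John--Nirenberg matches exactly the hypothesis $\ln\Phi(t)=O(t)$ in (\ref{a2}), which explains both why Theorem A is the natural endpoint of Oskolkov's result and why the exponential form of the conclusion is sharp.
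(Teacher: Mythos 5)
The paper contains no proof of Theorem A: it is quoted verbatim from Rodin \cite{Ro}, and the paper's own machinery (Theorem D, the operator $V$, the estimate for $H_{\ast}^{p}$) deliberately stops at the quadratic range $0<p\leq 2$. Your argument must therefore stand on its own, and its load-bearing step fails. The weak type $(1,1)$ inequality
$\mu \{ x:\sup_{N}\frac{1}{N+1}\sum_{k=0}^{N}(\exp (A|S_{k}^{T}h(x)-h(x)|)-1)>\lambda \}\leq C_{A}\Vert h\Vert _{1}/\lambda $
is false, and not because of a missing lemma: a bound that is homogeneous of degree one in $(h,\lambda )$ is only compatible with a sublinear operator, whereas the exponential is superlinear in $h$. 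Concretely, take $h=\frac{\varepsilon }{2\eta }\mathbb{I}_{(-\eta ,\eta )}$, so $\Vert h\Vert _{1}=\varepsilon $; already the single term $N=0$ gives $\exp (A|S_{0}^{T}h-h|)-1\geq \exp (A\varepsilon /(4\eta ))-1$ on a set of measure $2\eta $, so with $\lambda =\exp (A\varepsilon /(8\eta ))$ the left side is at least $2\eta $ while the right side is $C_{A}\varepsilon \exp (-A\varepsilon /(8\eta ))$, which is exponentially smaller as $\eta \rightarrow 0$. (The defect is not cured by passing to $\limsup_{N}$ instead of $\sup_{N}$: since $|S_{k}^{T}h(x)|\gtrsim \varepsilon /|x|$ for a positive proportion of $k\leq N$ for every large $N$, the same spike violates the $\limsup $ version as well.) Hence the ``usual density argument'' cannot be applied at the level of the exponential expression; this is precisely why Theorem A is much harder than the $p\leq 2$ results, and why the answer is so sensitive to the growth of $\Phi $ (Theorem B of Karagulyan shows the exponential threshold is sharp).

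The ingredients you name --- BMO, John--Nirenberg, Calder\'on--Zygmund, the quadratic moment bound --- are indeed the ones Rodin and (in the Walsh case) Schipp use, but they must be assembled so that the weak type $(1,1)$ inequality is only ever asserted for \emph{sublinear} objects. The correct shape is: prove a weak $(1,1)$ bound for a quadratic maximal operator such as $\sup_{N}(\frac{1}{N+1}\sum_{k\leq N}|S_{k}^{T}f|^{2})^{1/2}$ (the trigonometric analogue of Theorem D), and a BMO-type endpoint for $f\in L_{\infty }$; interpolating and applying John--Nirenberg blockwise to $f-S_{2^{n}}^{T}f$ yields, for a.e.\ $x$, moment bounds of the form $\sup_{N}(\frac{1}{N+1}\sum_{k\leq N}|S_{k}^{T}h(x)-h(x)|^{j})^{1/j}\leq C\,j\,Gh(x)$ with $G$ a sublinear operator of weak type $(1,1)$. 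Only then is the Taylor series of $\exp (A\cdot )$ summed, pointwise: $\sum_{j}\frac{A^{j}}{j!}(CjGh(x))^{j}\lesssim \sum_{j}(CAe\,Gh(x))^{j}$, which converges and is small wherever $Gh(x)<1/(2CAe)$, a set whose complement has measure $\lesssim A\Vert h\Vert _{1}$. The density argument thus lives on $G$, not on the exponential operator. Two further points: your $\sqrt{q}$ growth in the moment estimate is inconsistent with your own John--Nirenberg heuristic, which produces linear growth $O(q)$ (and linear growth is exactly what matches the exponential --- not subgaussian --- threshold $\ln \Phi (t)=O(t)$ and its sharpness in Theorem B); and passing from bounds uniform in $N$ to a bound on the supremum over $N$ is not a ``standard promotion'' --- it is where the substance of Theorem D lies.
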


Karagulyan \cite{Ka} proved that the following is true.

\begin{OldTheorem2}
Suppose that a continuous increasing function $\Phi :[0,\infty )\rightarrow
\lbrack 0,\infty ),\Phi \left( 0\right) =0$, satisfies the condition%
\begin{equation*}
\limsup_{t\rightarrow +\infty }\frac{\log \Phi \left( t\right) }{t}=\infty .
\end{equation*}%
Then there exists a function $f\in L_{1}(\mathbb{T})$ for which the relation%
\begin{equation*}
\limsup_{n\rightarrow \infty }\frac{1}{n+1}\sum\limits_{k=0}^{n}\Phi \left(
\left\vert S_{k}^{T}\left( x,f\right) \right\vert \right) =\infty
\end{equation*}%
holds everywhere on $\mathbb{T}$.
\end{OldTheorem2}

For quadratic partial sums of two-dimensional trigonometric Fourier series
Marcinkiewicz \cite{Ma2} has proved, that if $f\in L\log L\left( \mathbb{T}%
^{2}\right) $,$\mathbb{T}:=[-\pi ,\pi )^{2}$, then%
\begin{equation*}
\lim\limits_{n\rightarrow \infty }\frac{1}{n+1}\sum\limits_{k=0}^{n}\left(
S_{kk}^{T}\left( x,y,f\right) -f\left( x,y\right) \right) =0
\end{equation*}%
for a. e. $\left( x,y\right) \in \mathbb{T}^{2}$. ~Zhizhiashvili \cite{Zh}
improved this result showing that class $L\log L\left( \mathbb{T}^{2}\right)
$ can be replaced by $L_{1}\left( \mathbb{T}^{2}\right) $.

From a result of ~Konyagin \cite{Kon} it follows that for every $\varepsilon
>0$ there exists a function $f\in L\log ^{1-\varepsilon }\left( \mathbb{T}%
^{2}\right) $ such that
\begin{equation}
\lim\limits_{n\rightarrow \infty }\frac{1}{n+1}\sum\limits_{k=0}^{n}\left%
\vert S_{kk}^{T}\left( x,y,f\right) -f\left( x,y\right) \right\vert \neq 0%
\text{ \ \ for a. e. }\left( x,y\right) \in \mathbb{T}^{2}.  \label{str}
\end{equation}

These results show that in the case of one dimensional functions the $(C,1)$ summability
and $(C,1)$ strong summability we have the same maximal convergence spaces. That is, in both cases we have $L_1$.
But, the situation changes as we step further to the case of two dimensional functions. In other words,
the spaces of functions with almost everywhere summable  Marcinkiewicz and  strong Marcinkiewicz means are different.

The results on strong summation and approximation of trigonometric Fourier
series have been extended for several other orthogonal systems. For
instance, concerning the Walsh system see Schipp \cite{Sch1,Sch2,Sch3},
Fridli \cite{FS,FS2}, Leindler \cite{Le,Le1,Le2,Le3,Le4}, Totik \cite%
{To1,To2,To3}, Fridli and Schipp \cite{FS2}, Rodin \cite{Ro1}, Weisz \cite%
{We,We2}, Gabisonia \cite{Ga}.

The problems of summability of cubical partial sums of multiple Fourier
series have been investigated by Gogoladze \cite{Gog1,Gog2,Gog3}, Wang \cite%
{Wa}, Zhag \cite{ZhHe}, Glukhov \cite{Gl}, Goginava \cite{Gogi}, G\'at,
Goginava, Tkebuchava \cite{GGT}, Goginava, Gogoladze \cite{GG} .

For Walsh system Schipp \cite{Sch} proved that the following is true.

\begin{OldTheorem3}
Let $f\in L_{1}(G)$. Then for any $A>0$
\begin{equation*}
\lim\limits_{n\rightarrow \infty }\frac{1}{n+1}\sum\limits_{k=0}^{n}\left(
\exp \left( A\left\vert S_{k}\left( x,f\right) -f\left( x\right) \right\vert
\right) -1\right) =0
\end{equation*}%
for a. e. $x\in G$.
\end{OldTheorem3}

Schipp in \cite{Sch} introduce the following operator%
\begin{equation*}
V_{n}f\left( x\right) :=\left( \frac{1}{2^{n}}\int\limits_{G}\left(
\sum\limits_{j=0}^{n-1}2^{j-1}\mathbb{I}_{I_{j}}\left( t\right)
S_{2^{n}}f\left( x+t+e_{j}\right) \right) ^{2}d\mu \left( t\right) \right)
^{1/2}.
\end{equation*}

Let%
\begin{equation*}
Vf:=\sup\limits_{n}V_{n}f.
\end{equation*}

The following theorem is proved by Schipp.

\begin{OldTheorem4}[\protect\cite{Sch}]
\label{SchD} Let $f\in L_{1}\left( G\right) $. Then%
\begin{equation*}
\mu \left\{ \left\vert Vf\right\vert >\lambda \right\} \lesssim \frac{%
\left\Vert f\right\Vert _{1}}{\lambda }.
\end{equation*}
\end{OldTheorem4}

In \cite{GG} it is studied the exponential uniform strong approximation of the
Marcinkiewicz means of the two-dimensional Walsh-Fourier series. We say that
the function $\psi $ belongs to the class $\Psi $ if it increase on $%
[0,+\infty )$ and
\begin{equation*}
\lim\limits_{u\rightarrow 0}\psi \left( u\right) =\psi \left( 0\right) =0.
\end{equation*}

\begin{OldTheorem5}[\protect\cite{GG}]
a)Let $\varphi \in \Psi $ and let the inequality
\begin{equation*}
\overline{\lim\limits_{u\rightarrow \infty }}\frac{\varphi \left( u\right) }{%
\sqrt{u}}<\infty
\end{equation*}%
hold. Then for any function $f\in C\left( G\times G\right) $ the equality
\begin{equation*}
\lim\limits_{n\rightarrow \infty }\left\Vert \frac{1}{n}\sum%
\limits_{l=1}^{n}\left( e^{\varphi \left( \left\vert S_{ll}\left( f\right)
-f\right\vert \right) }-1\right) \right\Vert _{C}=0
\end{equation*}%
is satisfied.

b) For any function $\varphi \in \Psi $ satisfying the condition
\begin{equation*}
\overline{\lim\limits_{u\rightarrow \infty }}\frac{\varphi \left( u\right) }{%
\sqrt{u}}=\infty
\end{equation*}%
there exists a function $F\in C\left( G\times G\right) $ such that
\begin{equation*}
\overline{\lim\limits_{m\rightarrow \infty }}\frac{1}{m}\sum%
\limits_{l=1}^{m}\left( e^{\varphi \left( \left\vert S_{ll}\left(
0,0,F\right) -F\left( 0,0\right) \right\vert \right) }-1\right) =+\infty .
\end{equation*}
\end{OldTheorem5}

For the two-dimensional Walsh-Fourier series Weisz \cite{We2} proved that if
$f\in L_{1}\left( G\times G\right) $ then
\begin{equation*}
\frac{1}{n}\sum\limits_{j=0}^{n-1}\left( S_{j,j}\left( x,y;f\right) -f\left(
x,y\right) \right) \rightarrow 0
\end{equation*}%
for a. e. $\left( x,y\right) \in G\times G$.

In the paper we consider the strong means%
\begin{equation*}
H_{n}^{p}f:=\left( \frac{1}{2^{n}}\sum\limits_{m=0}^{2^{n}-1}\left\vert
S_{mm}f\right\vert ^{p}\right) ^{1/p}
\end{equation*}%
and the maximal strong operator%
\begin{equation*}
H_{\ast }^{p}f:=\sup\limits_{n\in \mathbb{N}}H_{n}^{p}f.
\end{equation*}

We study the a. e. convergence of strong Marcinkiewicz means of the
two-dimensional Walsh-Fourier series. In particular, the following is true.

\begin{theorem}
\label{strongest}Let $f\in L\log L\left( G\times G\right) $ and $0<p\leq 2$.
Then%
\begin{equation*}
\mu \left\{ H_{\ast }^{p}f>\lambda \right\} \lesssim \frac{1}{\lambda }%
\left( 1+\iint\limits_{G\times G}\left\vert f\right\vert \log ^{+}\left\vert
f\right\vert \right) .
\end{equation*}
\end{theorem}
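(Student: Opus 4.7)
The plan is to reduce to $p=2$ by Jensen's inequality, dominate $H_*^2 f$ pointwise by maximal operators built from Schipp's operator $V$ (Theorem D) acting in each variable separately, and then use the standard iteration of one-variable weak $(1,1)$ bounds to obtain the two-variable $L\log L$ estimate.

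\textbf{Reduction to $p=2$.} For $0<p\le 2$, the monotonicity of $L^p$-means with respect to the uniform probability measure on $\{0,\ldots,2^n-1\}$ gives $H_n^p f\le H_n^2 f$, so $H_*^p f\le H_*^2 f$ pointwise. Hence we may assume $p=2$.

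\textbf{Dyadic decomposition and pointwise bound.} Split $\{0,\ldots,2^n-1\}$ into the dyadic blocks $B_j=\{2^j,\ldots,2^{j+1}-1\}$ and write $|S_{mm}f|^2\lesssim |S_{2^j,2^j}f|^2+|S_{mm}f-S_{2^j,2^j}f|^2$ for $m\in B_j$. After summation the first piece is dominated by $\sup_j|S_{2^j,2^j}f|^2$, a one-parameter dyadic martingale maximal function of weak type $(1,1)$. For the fluctuation, factor $S_{mm}=S_m^{(1)}S_m^{(2)}$ and split
$$S_{mm}f-S_{2^j,2^j}f=(S_m^{(1)}-S_{2^j}^{(1)})S_m^{(2)}f+S_{2^j}^{(1)}(S_m^{(2)}-S_{2^j}^{(2)})f.$$
In each term I apply Schipp's one-dimensional identity (the same identity that underlies the construction of $V_j$ in Theorem D) in the relevant variable, and then use Cauchy-Schwarz in the inner $m$-sum over $B_j$. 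This produces a pointwise estimate of the form $V_j^{(1)}V_j^{(2)}f$, together with mixed terms coupling $V_j$ in one variable with a dyadic partial-sum operator in the other.

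\textbf{From one-variable weak $(1,1)$ to $L\log L$ in two variables.} Each factor in the pointwise bound is of weak type $(1,1)$ in its own variable (by Theorem D for the $V^{(i)}$-factors, and by standard martingale theory for the dyadic partial sums) and is bounded on $L^2$; hence by Yano-type extrapolation each maps $L\log L$ to $L^1$ strongly in its variable. Freezing one variable and applying the weak $(1,1)$ bound in the other, then using the $L\log L\to L^1$ bound in the first variable together with Fubini, yields the desired weak type $L\log L\to L^{1,\infty}$ estimate for the two-parameter operator. Taking the supremum over $n$ completes the proof.

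\textbf{Main obstacle.} The crucial technical step is the pointwise domination. Because the summation index $m$ is common to both variables in $S_{mm}f$, carrying Schipp's identity through simultaneously in both coordinates — and arranging a Cauchy-Schwarz that cleanly separates the variables into the tensor product $V_j^{(1)}V_j^{(2)}f$ with well-controlled cross terms — requires careful bookkeeping of the binary expansion of $m$. The subsequent iteration to the two-variable $L\log L$ estimate is standard machinery once the correct pointwise bound is in place.
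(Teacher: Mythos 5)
Your overall skeleton (reduce to $p=2$ by monotonicity of the $\ell^p$-means, dominate $H_{\ast}^{2}f$ pointwise by compositions of one-variable weak type $(1,1)$ operators --- Schipp's $V$ from Theorem D and dyadic maximal functions --- and then iterate the one-variable bounds to obtain the $L\log L$ estimate) coincides with the paper's strategy. However, there is a genuine gap at precisely the step you label the ``main obstacle'': the pointwise domination is not established, and the form you assert for it is not what actually comes out. The difficulty caused by the shared index $m$ in $S_{mm}f$ is not a matter of bookkeeping. When Schipp's Dirichlet-kernel identity is inserted in both variables, the leading piece of the kernel is $\sum_{m}\alpha_{mn}w_{m}\left( s+t+e_{j_{1}}+e_{j_{2}}\right) $, which depends on $s$ and $t$ only through $s+t$; Cauchy--Schwarz in $m$ therefore does not separate the variables into a tensor product $V_{j}^{(1)}V_{j}^{(2)}f$. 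What it produces instead, after the substitution $t\mapsto t+s$, is a one-variable $V$-operator applied to the \emph{diagonal} maximal function
\begin{equation*}
A\left( x,y\right) :=\sup\limits_{j}2^{j}\int\limits_{I_{j}}\left\vert
f\left( x+s,y+s\right) \right\vert d\mu \left( s\right) ,
\end{equation*}
which is not a composition of operators acting separately in $x$ and in $y$, so your iteration scheme does not apply to it as stated. The missing idea is that after the measure-preserving shear $F_{2}\left( x,y\right) :=f\left( x,y+x\right) $ one has $A\left( x,y+x\right) \lesssim M_{1}F_{2}\left( x,y\right) $; since the shear preserves the class $L\log L$, this gives $\left\Vert A\right\Vert _{1}\lesssim 1+\iint_{G\times G}\left\vert f\right\vert \log ^{+}\left\vert f\right\vert $, and the weak $(1,1)$ bound for $V$ applied to $A$ then closes the argument. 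Without this device (or an equivalent one) the coupled term is uncontrolled and the $L\log L$ hypothesis is never actually used where it is needed.

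A secondary point: the uncoupled terms of the correct pointwise bound are of the form $V_{2}\left( x,y,M_{1}f\right) $, $V_{1}\left( x,y,M_{2}f\right) $ and $Mf\left( x,y\right) $ (with $M$ the maximal function over squares $I_{n}\times I_{n}$, a one-parameter martingale maximal function and hence weak $(1,1)$), rather than a product $V_{j}^{(1)}V_{j}^{(2)}f$. For these, your iteration --- weak $(1,1)$ for the outer operator via Fubini and Theorem D, combined with the $L\log L\rightarrow L^{1}$ bound for the inner hybrid maximal function --- is exactly the standard Jessen--Marcinkiewicz--Zygmund argument the paper uses, so that part of your plan is sound once the correct pointwise estimate is in hand; the appeal to Yano extrapolation is unnecessary for it.
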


The weak type $\left( L\log ^{+}L,1\right) $ inequality and the usual
density argument of Marcinkiewicz and Zygmund imply

\begin{theorem}
\label{strongcon} Let $f\in L\log L\left( G\times G\right) $ and $0<p\leq 2$.
Then
\begin{equation*}
\left( \frac{1}{n}\sum\limits_{m=0}^{n-1}\left\vert S_{mm}\left(
x,y,f\right) -f\left( x,y\right) \right\vert ^{p}\right) ^{1/p}\rightarrow 0%
\text{ for a.e. }\left( x,y\right) \in G\times G\text{ as }n\rightarrow
\infty .
\end{equation*}
\end{theorem}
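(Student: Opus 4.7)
The plan is to deduce Theorem~\ref{strongcon} from Theorem~\ref{strongest} by the standard Marcinkiewicz--Zygmund density argument. The first reduction is to the dyadic subsequence $n=2^{k}$: if $2^{k-1}\le n<2^{k}$ then
$$\frac{1}{n}\sum_{m=0}^{n-1}|S_{mm}f-f|^{p}\le \frac{2}{2^{k}}\sum_{m=0}^{2^{k}-1}|S_{mm}f-f|^{p},$$
so a.e.\ convergence along $n=2^{k}$ (which is governed by the maximal operator $H_{\ast}^{p}$) implies the general claim.

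Next I would observe that the conclusion is trivial on Walsh polynomials: if $P(x,y)=\sum_{i,j<2^{N}}c_{ij}w_{i}(x)w_{j}(y)$, then $S_{mm}P=P$ for every $m\ge 2^{N}$, and therefore $\frac{1}{n}\sum_{m=0}^{n-1}|S_{mm}P-P|^{p}\to 0$. Since the dyadic conditional expectations $\mathbb{E}_{N}f$ (averages over the atoms $I_{N}(x)\times I_{N}(y)$) are Walsh polynomials of this form and satisfy $\mathbb{E}_{N}f\to f$ in $L\log L$, for any $\varepsilon>0$ I may choose such a polynomial $P$ with $h:=f-P$ satisfying $\|h\|_{1}+\int|h|\log^{+}|h|<\varepsilon$. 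Using $|a+b|^{p}\le C_{p}(|a|^{p}+|b|^{p})$ (with $C_{p}$ uniformly bounded for $p\in(0,2]$), for every $m\ge 2^{N}$ one has $|S_{mm}f-f|^{p}\le C_{p}(|S_{mm}h|^{p}+|h|^{p})$; averaging, taking $p$-th roots, and using the first step gives
$$\Lambda f(x,y):=\limsup_{n\to\infty}\Bigl(\frac{1}{n}\sum_{m=0}^{n-1}|S_{mm}f-f|^{p}\Bigr)^{\!1/p}\lesssim H_{\ast}^{p}h(x,y)+|h(x,y)|.$$

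To conclude I would apply Theorem~\ref{strongest} to $h$ together with Chebyshev's inequality for $h\in L^{1}$, obtaining $\mu\{\Lambda f>\lambda\}\lesssim \varepsilon/\lambda$ for every $\lambda>0$; letting $\varepsilon\to 0$ forces $\mu\{\Lambda f>\lambda\}=0$, whence $\Lambda f=0$ a.e., which is the stated a.e.\ convergence. The only non-routine ingredient is Theorem~\ref{strongest} itself; the sole delicate point in this reduction is that the polynomial approximation $f=P+h$ must be taken with $h$ small in the $L\log L$ modular (not merely in $L^{1}$), in order that the right-hand side of the weak-type bound applied to $h$ be small. This is precisely what the $L\log L$-convergence of the dyadic martingale $\mathbb{E}_{N}f\to f$ provides.
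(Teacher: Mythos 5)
Your overall strategy is exactly what the paper intends: the paper offers no written proof of Theorem~\ref{strongcon} beyond the remark that it follows from ``the weak type $(L\log ^{+}L,1)$ inequality and the usual density argument of Marcinkiewicz and Zygmund,'' and your dyadic reduction $\frac{1}{n}\sum_{m<n}\le\frac{2}{2^{k}}\sum_{m<2^{k}}$, the observation that $S_{mm}P=P$ for Walsh polynomials of order $<2^{N}$ once $m\ge 2^{N}$, the resulting pointwise bound $\Lambda f\lesssim H_{\ast }^{p}h+|h|$, and the density of Walsh polynomials in the $L\log L$ modular (via the dyadic martingale, or via truncation plus $L_{2}$-approximation) are all correct ingredients of that argument.

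There is, however, one step that fails as written: you cannot conclude $\mu \{H_{\ast }^{p}h>\lambda \}\lesssim \varepsilon /\lambda $ by ``applying Theorem~\ref{strongest} to $h$,'' because the right-hand side of Theorem~\ref{strongest} is $\frac{1}{\lambda }\bigl(1+\iint |h|\log ^{+}|h|\bigr)$ and the additive constant $1$ does not become small with $h$; applied verbatim it only yields $\mu \{H_{\ast }^{p}h>\lambda \}\lesssim 1/\lambda $, which destroys the limiting step $\varepsilon \to 0$. The standard repair uses the homogeneity $H_{\ast }^{p}(ch)=|c|\,H_{\ast }^{p}h$: apply Theorem~\ref{strongest} to $h/\delta $ at level $\lambda /\delta $ to obtain
\begin{equation*}
\mu \left\{ H_{\ast }^{p}h>\lambda \right\} \lesssim \frac{\delta }{\lambda }\left( 1+\iint\limits_{G\times G}\frac{|h|}{\delta }\log ^{+}\frac{|h|}{\delta }\right) \leq \frac{1}{\lambda }\left( \delta +\iint\limits_{G\times G}|h|\log ^{+}|h|+\left\Vert h\right\Vert _{1}\log ^{+}\frac{1}{\delta }\right) ,
\end{equation*}
and choose, say, $\delta =\varepsilon ^{1/2}$ when $\left\Vert h\right\Vert _{1}+\iint |h|\log ^{+}|h|<\varepsilon $; for each fixed $\lambda $ the right-hand side then tends to $0$ as $\varepsilon \to 0$. (Equivalently, one may restate the weak-type inequality in the $\lambda $-homogeneous form with $\log ^{+}(|f|/\lambda )$ inside the integral, which is the form the classical Marcinkiewicz--Zygmund density argument actually uses.) With this correction your proof is complete and coincides with the argument the paper tacitly invokes.
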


We note that from the theorem of Getsadze \cite{Gets} it follows that the
class $L\log L$ in the last theorem is necessary in the context of strong
summability in question. That is, it is not possible to give a larger convergence space (of the form $L\log L\phi(L)$ with $\phi(\infty)=0$) than
$L\log L$. This means a sharp contrast between the one and two dimensional strong summability.

We also note that in the case of trigonometric system Sj\"olin proved \cite{Sjo} that for every $p>1$ and two variable function $f\in L_p(\mathbb T^2)$  the almost everywhere convergence $S_{n n}f\to f$ ($n\to\infty$) holds. Since this issue with respect to the Walsh system is still open, then in this point of view
Theorem 2 may seem more interesting.

\section{Proof of Theorems}

Let $f\in L_{1}\left( G\times G\right) $. Then the dyadic maximal function
is given by
\begin{equation*}
Mf\left( x,y\right) :=\sup\limits_{n\in \mathbb{N}}2^{2n}\left\vert
\int\limits_{I_{n}\times I_{n}}f\left( x+s,y+t\right) d\mu \left( s,t\right)
\right\vert .\,\,
\end{equation*}

For a two-dimensional integrable function $f$ we need to introduce the
following hybrid maximal functions
\begin{equation*}
M_{1}f\left( x,y\right) :=\sup\limits_{n\in \mathbb{N}}2^{n}\int%
\limits_{I_{n}}\left\vert f\left( x+s,y\right) \right\vert d\mu \left(
s\right) ,
\end{equation*}%
\begin{equation*}
M_{2}f\left( x,y\right) :=\sup\limits_{n\in \mathbb{N}}2^{n}\int%
\limits_{I_{n}}\left\vert f\left( x,y+t\right) \right\vert d\mu \left(
t\right) ,
\end{equation*}%
\begin{eqnarray}
&&V_{1}\left( x,y,f\right)   \label{V1} \\
&:&=\sup\limits_{n\in \mathbb{N}}\left( \frac{1}{2^{n}}\int\limits_{G}\left(
\sum\limits_{j=0}^{n-1}2^{j-1}\mathbb{I}_{I_{j}}\left( t\right)
S^{(1)}_{2^{n}}f\left( x+t+e_{j},y\right) \right) ^{2}d\mu \left( t\right) \right)
^{1/2},  \notag
\end{eqnarray}%
\begin{eqnarray}
&&V_{2}\left( x,y,f\right)   \label{V2} \\
&:&=\sup\limits_{n\in \mathbb{N}}\left( \frac{1}{2^{n}}\int\limits_{G}\left(
\sum\limits_{j=0}^{n-1}2^{j-1}\mathbb{I}_{I_{j}}\left( t\right)
S^{(2)}_{2^{n}}f\left( x,y+t+e_{j}\right) \right) ^{2}d\mu \left( t\right) \right)
^{1/2}.  \notag
\end{eqnarray}%
It is well known that for $f\in L\log ^{+}L$ the following estimation holds%
\begin{equation}
\iint\limits_{G\times G}\left\vert Mf\left( x,y\right) \right\vert d\mu
\left( x,y\right) \lesssim 1+\iint\limits_{G\times G}\left\vert f\left(
x,y\right) \right\vert \log ^{+}\left\vert f\left( x,y\right) \right\vert
d\mu \left( x,y\right)   \label{max1}
\end{equation}%
and $\ $for $s=1,2$%
\begin{equation}
\iint\limits_{G\times G}\left\vert M_{s}f\left( x,y\right) \right\vert d\mu
\left( x,y\right) \lesssim 1+\iint\limits_{G\times G}\left\vert f\left(
x,y\right) \right\vert \log ^{+}\left\vert f\left( x,y\right) \right\vert
d\mu \left( x,y\right) .  \label{max2}
\end{equation}

Set%
\begin{equation*}
\Omega :=\left\{ \left( x,y\right) \in G\times G:V_{1}f\left( x,y\right)
>\lambda \right\} .
\end{equation*}%
Then by Fubin's Theorem and Theorem D we can write%
\begin{eqnarray}
\mu \left( \Omega \right) &=&\iint\limits_{G\times G}\mathbb I_{\Omega }\left(
x,y\right) d\mu \left( x,y\right)  \label{weakV1} \\
&=&\int\limits_{G}\left( \int\limits_{G}\mathbb I_{\Omega }\left( x,y\right) d\mu
\left( x\right) \right) d\mu \left( y\right)  \notag \\
&\lesssim &\frac{1}{\lambda }\int\limits_{G}\left( \int\limits_{G}\left\vert
f\left( x,y\right) \right\vert d\mu \left( x\right) \right) d\mu \left(
y\right)  \notag \\
&\lesssim &\frac{\left\Vert f\right\Vert _{1}}{\lambda }.  \notag
\end{eqnarray}

Analogously, we can prove that%
\begin{equation}
\mu \left\{ \left( x,y\right) \in G\times G:V_{2}f\left( x,y\right) >\lambda
\right\} \lesssim \frac{\left\Vert f\right\Vert _{1}}{\lambda }.
\label{weakV2}
\end{equation}

For Dirichlet kernel Schipp proved the following representation \cite[page
622]{Sch}%
\begin{eqnarray}
D_{m}\left( x\right) &=&\sum\limits_{k=0}^{n-1}\mathbb I_{I_{k}\backslash
I_{k+1}}\left( x\right) \sum\limits_{j=0}^{k}\varepsilon
_{kj}2^{j-1}w_{m}\left( x+e_{j}\right)  \label{sch} \\
&&-\frac{1}{2}w_{m}\left( x\right) +\left( m+1/2\right) \mathbb I_{I_{n}}\left(
x\right) ,  \notag
\end{eqnarray}%
where $m<2^{n}$ and%
\begin{equation*}
\varepsilon _{kj}=\left\{
\begin{array}{l}
-1,\ \text{if }j=0,1,...,k-1, \\
+1,\ \text{if }j=k.%
\end{array}%
\right.
\end{equation*}

\begin{proof}[Proof of Theorem \protect\ref{strongest}]
First, we prove that the following estimation holds%
\begin{eqnarray}
&&\left( \frac{1}{2^{n}}\sum\limits_{m=0}^{2^{n}-1}\left\vert S_{mm}\left(
x,y,f\right) \right\vert ^{2}\right) ^{1/2}  \label{mainest} \\
&\lesssim &V_{2}\left( x,y,M_{1}f\right) +V_{1}\left( x,y,M_{2}f\right)
+Mf\left( x,y\right)   \notag \\
&&+V_{2}\left( x,y,A\right) +V_{1}\left( x,y,A\right) +\left\Vert
f\right\Vert _{1},  \notag
\end{eqnarray}%
where $A$  is an integrable on $G\times G$ function of two variable which
will be defined below.

It is easy to show that
\begin{eqnarray*}
&&\left( \sum\limits_{m=0}^{2^{n}-1}\left\vert S_{mm}\left( x,y,f\right)
\right\vert ^{2}\right) ^{1/2} \\
&=&\left( \sum\limits_{m=0}^{2^{n}-1}\left\vert S_{mm}\left(
x,y,S_{2^{n},2^{n}}f\right) \right\vert ^{2}\right) ^{1/2} \\
&=&\left( \sum\limits_{m=0}^{2^{n}-1}\left\vert \iint\limits_{G\times
G}S_{2^{n},2^{n}}\left( x+s,y+t,f\right) D_{m}\left( s\right) D_{m}\left(
t\right) d\mu \left( s,t\right) \right\vert ^{2}\right) ^{1/2} \\
&\leq &\sup\limits_{\left\{ \alpha _{mn}\left( x,y\right) \right\}
}\left\vert \iint\limits_{G\times G}S_{2^{n},2^{n}}\left( x+s,y+t,f\right)
\sum\limits_{m=0}^{2^{n}-1}\alpha _{mn}\left( x,y\right) D_{m}\left(
s\right) D_{m}\left( t\right) d\mu \left( s,t\right) \right\vert
\end{eqnarray*}%
by taking the supremum over all $\left\{ \alpha _{mn}\left( x,y\right)
\right\} $ for which%
\begin{equation*}
\left( \sum\limits_{m=0}^{2^{n}-1}\left\vert \alpha _{mn}\left( x,y\right)
\right\vert ^{2}\right) ^{1/2}\leq 1.
\end{equation*}%
From (\ref{sch}) we can write%
\begin{equation}
\iint\limits_{G\times G}S_{2^{n},2^{n}}\left( x+s,y+t,f\right)
\sum\limits_{m=0}^{2^{n}-1}\alpha _{mn}\left( x,y\right) D_{m}\left(
s\right) D_{m}\left( t\right) d\mu \left( s,t\right)   \label{J1-J9}
\end{equation}%
\begin{equation*}
=\iint\limits_{G\times G}S_{2^{n},2^{n}}\left( x+s,y+t,f\right)
\sum\limits_{k_{1}=0}^{n-1}\sum\limits_{k_{2}=0}^{n-1}\sum%
\limits_{j_{1}=0}^{k_{1}}\sum\limits_{j_{2}=0}^{k_{2}}\mathbb I_{I_{k_{1}}\backslash
I_{k_{1}+1}}\left( s\right)
\end{equation*}%
\begin{equation*}
\times \mathbb I_{I_{k_{2}}\backslash I_{k_{2}+1}}\left( t\right) \varepsilon
_{k_{1}j_{1}}\varepsilon _{k_{2}j_{2}}2^{j_{1}+j_{2}-2}
\end{equation*}%
\begin{equation*}
\times \sum\limits_{m=0}^{2^{n}-1}\alpha _{mn}\left( x,y\right) w_{m}\left(
s+t+e_{j_{1}}+e_{j_{2}}\right) d\mu \left( s,t\right)
\end{equation*}%
\begin{equation*}
-\frac{1}{2}\iint\limits_{G\times G}S_{2^{n},2^{n}}\left( x+s,y+t,f\right)
\sum\limits_{k_{1}=0}^{n-1}\sum\limits_{j_{1}=0}^{k_{1}}\mathbb I_{I_{k_{1}}%
\backslash I_{k_{1}+1}}\left( s\right)
\end{equation*}%
\begin{equation*}
\times \varepsilon _{k_{1}j_{1}}2^{j_{1}-1}\sum\limits_{m=0}^{2^{n}-1}\alpha
_{mn}\left( x,y\right) w_{m}\left( s+t+e_{j_{1}}\right) d\mu \left(
s,t\right)
\end{equation*}%
\begin{equation*}
+\iint\limits_{G\times G}S_{2^{n},2^{n}}\left( x+s,y+t,f\right)
\sum\limits_{k_{1}=0}^{n-1}\sum\limits_{j_{1}=0}^{k_{1}}\mathbb I_{I_{k_{1}}%
\backslash I_{k_{1}+1}}\left( s\right)
\end{equation*}%
\begin{equation*}
\times \varepsilon _{k_{1}j_{1}}2^{j_{1}-1}\mathbb I_{I_{n}}\left( t\right)
\sum\limits_{m=0}^{2^{n}-1}\alpha _{mn}\left( x,y\right) w_{m}\left(
s+e_{j_{1}}\right) \left( m+1/2\right) d\mu \left( s,t\right)
\end{equation*}%
\begin{equation*}
-\frac{1}{2}\iint\limits_{G\times G}S_{2^{n},2^{n}}f\left( x+s,y+t\right)
\sum\limits_{k_{2}=0}^{n-1}\sum\limits_{j_{2}=0}^{k_{2}}\mathbb I_{I_{k_{2}}%
\backslash I_{k_{2}+1}}\left( t\right)
\end{equation*}%
\begin{equation*}
\times \varepsilon _{k_{2}j_{2}}2^{j_{2}-1}\sum\limits_{m=0}^{2^{n}-1}\alpha
_{mn}\left( x,y\right) w_{m}\left( s+t+e_{j_{2}}\right) d\mu \left(
s,t\right)
\end{equation*}%
\begin{equation*}
+\frac{1}{4}\iint\limits_{G\times G}S_{2^{n},2^{n}}\left( x+s,y+t,f\right)
\sum\limits_{m=0}^{2^{n}-1}\alpha _{mn}\left( x,y\right) w_{m}\left(
s+t\right) d\mu \left( s,t\right)
\end{equation*}%
\begin{equation*}
-\frac{1}{2}\iint\limits_{G\times G}S_{2^{n},2^{n}}\left( x+s,y+t,f\right)
\sum\limits_{m=0}^{2^{n}-1}\alpha _{mn}\left( x,y\right) w_{m}\left(
s\right)
\end{equation*}%
\begin{equation*}
\times \left( m+\frac{1}{2}\right) \mathbb I_{I_{n}}\left( t\right) d\mu \left(
s,t\right)
\end{equation*}%
\begin{equation*}
+\iint\limits_{G\times G}S_{2^{n},2^{n}}\left( x+s,y+t,f\right)
\sum\limits_{k_{2}=0}^{n-1}\sum\limits_{j_{2}=0}^{k_{2}}\mathbb I_{I_{k_{2}}%
\backslash I_{k_{2}+1}}\left( t\right)
\end{equation*}%
\begin{equation*}
\times \varepsilon _{k_{2}j_{2}}2^{j_{2}-1}\sum\limits_{m=0}^{2^{n}-1}\alpha
_{mn}\left( x,y\right) w_{m}\left( t+e_{j_{2}}\right)
\end{equation*}%
\begin{equation*}
\times \left( m+\frac{1}{2}\right) \mathbb I_{I_{n}}\left( s\right) d\mu \left(
s,t\right)
\end{equation*}%
\begin{equation*}
-\frac{1}{2}\iint\limits_{G\times G}S_{2^{n},2^{n}}\left( x+s,y+t,f\right)
\sum\limits_{m=0}^{2^{n}-1}\alpha _{mn}\left( x,y\right) w_{m}\left(
t\right)
\end{equation*}%
\begin{equation*}
\times \left( m+\frac{1}{2}\right) \mathbb I_{I_{n}}\left( s\right) d\mu \left(
s,t\right)
\end{equation*}%
\begin{equation*}
+\iint\limits_{G\times G}S_{2^{n},2^{n}}\left( x+s,y+t,f\right)
\sum\limits_{m=0}^{2^{n}-1}\alpha _{mn}\left( x,y\right)
\end{equation*}%
\begin{equation*}
\times \left( m+\frac{1}{2}\right) ^{2}\mathbb I_{I_{n}}\left( s\right)
\mathbb I_{I_{n}}\left( t\right) d\mu \left( s,t\right)
\end{equation*}%
\begin{equation*}
:=\sum\limits_{k=1}^{9}J_{k}.
\end{equation*}

It is easy to show that%
\begin{eqnarray}
\left\vert J_{9}\right\vert  &\lesssim &\left(
\sum\limits_{m=0}^{2^{n}-1}\left\vert \alpha _{mn}\left( x,y\right)
\right\vert ^{2}\right) ^{1/2}  \label{J9} \\
&&\times 2^{\left( 5/2\right) n}\iint\limits_{I_{n}\times I_{n}}\left\vert
f\left( x+s,y+t\right) \right\vert d\mu \left( s,t\right)   \notag \\
&\lesssim &2^{n/2}Mf\left( x,y\right) ,  \notag
\end{eqnarray}%
\begin{equation}
\left\vert J_{5}\right\vert \lesssim 2^{n/2}\left(
\sum\limits_{m=0}^{2^{n}-1}\left\vert \alpha _{mn}\left( x,y\right)
\right\vert ^{2}\right) ^{1/2}\left\Vert f\right\Vert _{1}\lesssim
2^{n/2}\left\Vert f\right\Vert _{1}.  \label{J5}
\end{equation}%
\begin{equation}
\left\vert J_{8}\right\vert \lesssim \iint\limits_{I_{n}\times
G}S_{2^{n},2^{n}}\left( x+s,y+t,|f|\right)   \label{J8}
\end{equation}%
\begin{equation*}
\times \left\vert \sum\limits_{m=0}^{2^{n}-1}\alpha _{mn}\left( x,y\right)
w_{m}\left( t\right) \left( m+1/2\right) \right\vert d\mu \left( s,t\right)
\end{equation*}%
\begin{equation*}
=\iint\limits_{I_{n}\times G}\left( 2^{n}\int\limits_{I_{n}}\left\vert
f\left( x+s,y+t+v\right) \right\vert d\mu \left( v\right) \right)
\end{equation*}%
\begin{equation*}
\times \left\vert \sum\limits_{m=0}^{2^{n}-1}\alpha _{mn}\left( x,y\right)
w_{m}\left( t\right) \left( m+1/2\right) \right\vert d\mu \left( s,t\right)
\end{equation*}%
\begin{equation*}
=\int\limits_{G}\left( \int\limits_{I_{n}}\left(
2^{n}\int\limits_{I_{n}}\left\vert f\left( x+s,y+t+v\right) \right\vert d\mu
\left( s\right) \right) d\mu \left( v\right) \right)
\end{equation*}%
\begin{equation*}
\times \left\vert \sum\limits_{m=0}^{2^{n}-1}\alpha _{mn}\left( x,y\right)
w_{m}\left( t\right) \left( m+1/2\right) \right\vert d\mu \left( t\right)
\end{equation*}%
\begin{equation*}
\lesssim \int\limits_{G}\left( \int\limits_{I_{n}}M_{1}f\left(
x,y+t+v\right) d\mu \left( v\right) \right)
\end{equation*}%
\begin{equation*}
\times \left\vert \sum\limits_{m=0}^{2^{n}-1}\alpha _{mn}\left( x,y\right)
w_{m}\left( t\right) \left( m+1/2\right) \right\vert d\mu \left( t\right)
\end{equation*}%
\begin{equation*}
\lesssim 2^{-n}\int\limits_{G}S_{2^{n}}^{\left( 2\right) }\left(
x,y+t,M_{1}f\right)
\end{equation*}%
\begin{equation*}
\times \left\vert \sum\limits_{m=0}^{2^{n}-1}\alpha _{mn}\left( x,y\right)
w_{m}\left( t\right) \left( m+1/2\right) \right\vert d\mu \left( t\right)
\end{equation*}%
\begin{equation*}
\lesssim 2^{-n}\left( \int\limits_{G}\left( S_{2^{n}}^{\left( 2\right)
}\left( x,y+t,M_{1}f\right) \right) ^{2}d\mu \left( t\right) \right) ^{1/2}
\end{equation*}%
\begin{equation*}
\times \left( \int\limits_{G}\left\vert \sum\limits_{m=0}^{2^{n}-1}\alpha
_{mn}\left( x,y\right) w_{m}\left( t\right) \left( m+1/2\right) \right\vert
^{2}d\mu \left( t\right) \right) ^{1/2}
\end{equation*}%
\begin{equation*}
\lesssim 2^{n/2}\left( \sum\limits_{m=0}^{2^{n}-1}\left\vert \alpha
_{mn}\left( x,y\right) \right\vert ^{2}\right) ^{1/2}V_{2}\left(
x,y,M_{1}f\right)
\end{equation*}%
\begin{equation*}
\lesssim 2^{n/2}V_{2}\left( x,y,M_{1}f\right) .
\end{equation*}

Analogously, we can prove that%
\begin{equation}
\left\vert J_{6}\right\vert \lesssim 2^{n/2}V_{1}\left( x,y,M_{2}f\right) .
\label{J6}
\end{equation}

Now, we estimate $J_{7}$. Since%
\begin{equation*}
\int\limits_{I_{n}}S_{2^{n},2^{n}}\left( x+s,y+t,\left\vert f\right\vert
\right) d\mu \left( s\right) =2^{-n}S_{2^{n},2^{n}}\left( x,y+t,\left\vert
f\right\vert \right)
\end{equation*}%
we can write

\begin{equation}
\left\vert J_{7}\right\vert \lesssim
\sum\limits_{j_{2}=0}^{n-1}\sum\limits_{k_{2}=j_{2}}^{n-1}2^{j_{2}-1}\iint%
\limits_{I_{n}\times \left( I_{k_{2}}\backslash I_{k_{2}+1}\right)
}S_{2^{n},2^{n}}\left( x+s,y+t,\left\vert f\right\vert \right)   \label{J7-1}
\end{equation}%
\begin{equation*}
\times \left\vert \sum\limits_{m=0}^{2^{n}-1}\alpha _{mn}\left( x,y\right)
w_{m}\left( t+e_{j_{2}}\right) \left( m+1/2\right) \right\vert d\mu \left(
s,t\right)
\end{equation*}%
\begin{equation*}
\lesssim \sum\limits_{j_{2}=0}^{n-1}2^{j_{2}-1}\iint\limits_{I_{n}\times
I_{j_{2}}}S_{2^{n},2^{n}}\left( x+s,y+t,\left\vert f\right\vert \right)
\end{equation*}%
\begin{equation*}
\times \left\vert \sum\limits_{m=0}^{2^{n}-1}\alpha _{mn}\left( x,y\right)
w_{m}\left( t+e_{j_{2}}\right) \left( m+1/2\right) \right\vert d\mu \left(
s,t\right)
\end{equation*}%
\begin{equation*}
\lesssim \sum\limits_{j_{2}=0}^{n-1}2^{j_{2}-1}\int\limits_{I_{j_{2}}}\left(
\int\limits_{I_{n}}S_{2^{n},2^{n}}\left( x+s,y+t,\left\vert f\right\vert
\right) d\mu \left( s\right) \right)
\end{equation*}%
\begin{equation*}
\times \left\vert \sum\limits_{m=0}^{2^{n}-1}\alpha _{mn}\left( x,y\right)
w_{m}\left( t+e_{j_{2}}\right) \left( m+1/2\right) \right\vert d\mu \left(
t\right)
\end{equation*}%
\begin{equation*}
\lesssim
2^{-n}\sum\limits_{j_{2}=0}^{n-1}2^{j_{2}-1}\int%
\limits_{I_{j_{2}}}S_{2^{n},2^{n}}\left( x,y+t,\left\vert f\right\vert
\right)
\end{equation*}%
\begin{equation*}
\times \left\vert \sum\limits_{m=0}^{2^{n}-1}\alpha _{mn}\left( x,y\right)
w_{m}\left( t+e_{j_{2}}\right) \left( m+1/2\right) \right\vert d\mu \left(
t\right)
\end{equation*}%
\begin{equation*}
\lesssim
2^{-n}\sum\limits_{j_{2}=0}^{n-1}2^{j_{2}-1}\int\limits_{I_{j_{2}}}S_{2^{n},2^{n}}%
\left( x,y+t+e_{j_{2}},\left\vert f\right\vert \right)
\end{equation*}%
\begin{equation*}
\times \left\vert \sum\limits_{m=0}^{2^{n}-1}\alpha _{mn}\left( x,y\right)
w_{m}\left( t\right) \left( m+1/2\right) \right\vert d\mu \left( t\right)
\end{equation*}%
\begin{equation*}
\lesssim
2^{-n}\int\limits_{G}\sum\limits_{j_{2}=0}^{n-1}2^{j_{2}-1}\mathbb I_{I_{j_{2}}}\left(
t\right) S_{2^{n},2^{n}}\left( x,y+t+e_{j_{2}},\left\vert f\right\vert
\right)
\end{equation*}%
\begin{equation*}
\times \left\vert \sum\limits_{m=0}^{2^{n}-1}\alpha _{mn}\left( x,y\right)
w_{m}\left( t\right) \left( m+1/2\right) \right\vert d\mu \left( t\right)
\end{equation*}%
\begin{equation*}
\lesssim \left( \int\limits_{G}\left( \sum\limits_{j_{2}=0}^{n-1}2^{j_{2}-1}%
\mathbb{I}_{I_{j_{2}}}\left( t\right) S_{2^{n},2^{n}}\left(
x,y+t+e_{j_{2}},\left\vert f\right\vert \right) \right) ^{2}d\mu \left(
t\right) \right) ^{1/2}.
\end{equation*}%
Since%
\begin{equation*}
S_{2^{n},2^{n}}\left( x,y+t+e_{j_{2}},\left\vert f\right\vert \right)
\end{equation*}%
\begin{equation*}
=2^{n}\int\limits_{I_{n}}\left( 2^{n}\int\limits_{I_{n}}\left\vert f\left(
x+u,y+t+e_{j_{2}}+v\right) \right\vert d\mu \left( u\right) \right) d\mu
\left( v\right)
\end{equation*}%
\begin{equation*}
\lesssim 2^{n}\int\limits_{I_{n}}M_{1}f\left( x,y+t+e_{j_{2}}+v\right) d\mu
\left( v\right)
\end{equation*}%
\begin{equation*}
=S_{2^{n}}^{\left( 2\right) }\left( x,y+t+e_{j_{2}},M_{1}f\right)
\end{equation*}%
from (\ref{J7-1}) we can write%
\begin{equation}
\left\vert J_{7}\right\vert   \label{J7}
\end{equation}%
\begin{equation*}
\lesssim \left( \int\limits_{G}\left( \sum\limits_{j_{2}=0}^{n-1}2^{j_{2}-1}%
\mathbb{I}_{I_{j_{2}}}\left( t\right) S_{2^{n}}^{\left( 2\right) }\left(
x,y+t+e_{j_{2}},M_{1}f\right) \right) ^{2}d\mu \left( t\right) \right) ^{1/2}
\end{equation*}%
\begin{equation*}
\lesssim 2^{n/2}V_{2}\left( x,y,M_{1}f\right) .
\end{equation*}

Analogously, we can prove that%
\begin{equation}
\left\vert J_{3}\right\vert \lesssim 2^{n/2}V_{1}\left( x,y,M_{2}f\right) .
\label{J3}
\end{equation}

For $J_{1}$ we can write%
\begin{equation}
J_{1}\lesssim
\sum\limits_{k_{1}=0}^{n-1}\sum\limits_{k_{2}=0}^{n-1}\sum%
\limits_{j_{1}=0}^{k_{1}}\sum\limits_{j_{2}=0}^{k_{2}}2^{j_{1}+j_{2}-2}
\label{J1+J2}
\end{equation}%
\begin{equation*}
\times \iint\limits_{\left( I_{k_{1}}\backslash I_{k_{1}+1}\right) \times
\left( _{I_{k_{2}}\backslash I_{k_{2}+1}}\right) }S_{2^{n},2^{n}}\left(
x+s,y+t,\left\vert f\right\vert \right)
\end{equation*}%
\begin{equation*}
\times \left\vert \sum\limits_{m=0}^{2^{n}-1}\alpha _{mn}\left( x,y\right)
w_{m}\left( s+t+e_{j_{1}}+e_{j_{2}}\right) \right\vert d\mu \left( s,t\right)
\end{equation*}%
\begin{equation*}
\lesssim
\sum\limits_{j_{1}=0}^{n-1}\sum\limits_{j_{2}=0}^{n-1}2^{j_{1}+j_{2}-2}\iint%
\limits_{I_{j_{1}}\times I_{j_{2}}}S_{2^{n},2^{n}}\left( x+s,y+t,\left\vert
f\right\vert \right)
\end{equation*}%
\begin{equation*}
\times \left\vert \sum\limits_{m=0}^{2^{n}-1}\alpha _{mn}\left( x,y\right)
w_{m}\left( s+t+e_{j_{1}}+e_{j_{2}}\right) \right\vert d\mu \left( s,t\right)
\end{equation*}%
\begin{equation*}
=\sum\limits_{j_{1}=0}^{n-1}\sum\limits_{j_{2}=0}^{j_{1}}2^{j_{1}+j_{2}-2}%
\iint\limits_{I_{j_{1}}\times I_{j_{2}}}S_{2^{n},2^{n}}\left(
x+s,y+t,\left\vert f\right\vert \right)
\end{equation*}%
\begin{equation*}
\times \left\vert \sum\limits_{m=0}^{2^{n}-1}\alpha _{mn}\left( x,y\right)
w_{m}\left( s+t+e_{j_{1}}+e_{j_{2}}\right) \right\vert d\mu \left( s,t\right)
\end{equation*}%
\begin{equation*}
+\sum\limits_{j_{1}=0}^{n-1}\sum%
\limits_{j_{2}=j_{1}+1}^{n-1}2^{j_{1}+j_{2}-2}\iint\limits_{I_{j_{1}}\times
I_{j_{2}}}S_{2^{n},2^{n}}\left( x+s,y+t,\left\vert f\right\vert \right)
\end{equation*}%
\begin{equation*}
\times \left\vert \sum\limits_{m=0}^{2^{n}-1}\alpha _{mn}\left( x,y\right)
w_{m}\left( s+t+e_{j_{1}}+e_{j_{2}}\right) \right\vert d\mu \left( s,t\right)
\end{equation*}%
\begin{equation*}
=J_{11}+J_{12}.
\end{equation*}

It is easy to show that $s+t+e_{j_{2}}\in I_{j_{2}}$ for $s\in
I_{j_{1}},t\in I_{j_{2}}$ and $j_{2}\leq j_{1}$. Hence, we can write%
\begin{equation}
J_{11}\lesssim
\sum\limits_{j_{1}=0}^{n-1}\sum\limits_{j_{2}=0}^{j_{1}}2^{j_{1}+j_{2}-2}%
\iint\limits_{I_{j_{1}}\times I_{j_{2}}}S_{2^{n},2^{n}}\left(
x+s,y+t+s+e_{j_{2}},\left\vert f\right\vert \right)   \label{J11-1}
\end{equation}%
\begin{equation*}
\times \left\vert \sum\limits_{m=0}^{2^{n}-1}\alpha _{mn}\left( x,y\right)
w_{m}\left( t+e_{j_{1}}\right) \right\vert d\mu \left( s,t\right)
\end{equation*}%
\begin{equation*}
\lesssim
2^{2n}\sum\limits_{j_{1}=0}^{n-1}\sum%
\limits_{j_{2}=0}^{j_{1}}2^{j_{1}+j_{2}-2}\iint\limits_{I_{j_{1}}\times
I_{j_{2}}}\left( \iint\limits_{I_{n}\times I_{n}}\left\vert f\left(
x+s+u,y+t+s+e_{j_{2}}+v\right) \right\vert d\mu \left( u,v\right) \right)
\end{equation*}%
\begin{equation*}
\times \left\vert \sum\limits_{m=0}^{2^{n}-1}\alpha _{mn}\left( x,y\right)
w_{m}\left( t+e_{j_{1}}\right) \right\vert d\mu \left( s,t\right)
\end{equation*}%
\begin{equation*}
\lesssim
2^{2n}\sum\limits_{j_{1}=0}^{n-1}\sum\limits_{j_{2}=0}^{j_{1}}2^{j_{2}-2}
\end{equation*}%
\begin{equation*}
\times \int\limits_{I_{j_{2}}}\left( \iint\limits_{I_{n}\times I_{n}}\left(
2^{j_{1}}\int\limits_{I_{j_{1}}}\left\vert f\left(
x+s+u,y+t+s+e_{j_{2}}+v\right) \right\vert d\mu \left( s\right) \right)
\right) d\left( u,v\right)
\end{equation*}%
\begin{equation*}
\times \left\vert \sum\limits_{m=0}^{2^{n}-1}\alpha _{mn}\left( x,y\right)
w_{m}\left( t+e_{j_{1}}\right) \right\vert d\mu \left( t\right)
\end{equation*}%
\begin{equation*}
\lesssim
2^{2n}\sum\limits_{j_{1}=0}^{n-1}\sum\limits_{j_{2}=0}^{j_{1}}2^{j_{2}-2}
\end{equation*}%
\begin{equation*}
\times \int\limits_{I_{j_{2}}}\left( \iint\limits_{I_{n}\times I_{n}}\left(
2^{j_{1}}\int\limits_{I_{j_{1}}}\left\vert f\left(
x+s,y+t+s+e_{j_{2}}+u+v\right) \right\vert d\mu \left( s\right) \right)
\right) d\left( u,v\right)
\end{equation*}%
\begin{equation*}
\times \left\vert \sum\limits_{m=0}^{2^{n}-1}\alpha _{mn}\left( x,y\right)
w_{m}\left( t+e_{j_{1}}\right) \right\vert d\mu \left( t\right)
\end{equation*}%
\begin{equation*}
\lesssim \sum\limits_{j_{1}=0}^{n-1}\sum\limits_{j_{2}=0}^{j_{1}}2^{j_{2}-2}
\end{equation*}%
\begin{equation*}
\times \int\limits_{I_{j_{2}}}\left( 2^{n}\int\limits_{I_{n}}\left(
2^{j_{1}}\int\limits_{I_{j_{1}}}\left\vert f\left(
x+s,y+t+s+e_{j_{2}}+v\right) \right\vert d\mu \left( s\right) \right)
\right) d\left( v\right)
\end{equation*}%
\begin{equation*}
\times \left\vert \sum\limits_{m=0}^{2^{n}-1}\alpha _{mn}\left( x,y\right)
w_{m}\left( t+e_{j_{1}}\right) \right\vert d\mu \left( t\right) .
\end{equation*}%
Set%
\begin{equation*}
A_{j_{1}}\left( x,y\right) :=2^{j_{1}}\int\limits_{I_{j_{1}}}\left\vert
f\left( x+s,y+s\right) \right\vert d\mu \left( s\right) .
\end{equation*}%
Then it is evident that%
\begin{equation*}
A_{j_{1}}\left( x,y+x\right) =2^{j_{1}}\int\limits_{I_{j_{1}}}\left\vert
f\left( x+s,y+x+s\right) \right\vert d\mu \left( s\right)
\end{equation*}%
\begin{equation*}
=2^{j_{1}}\int\limits_{I_{j_{1}}}\left\vert F_{2}\left( x+s,y\right)
\right\vert d\mu \left( s\right) ,
\end{equation*}%
where
\begin{equation*}
F_{2}\left( x,y\right) :=f\left( x,y+x\right) .
\end{equation*}%
From the condition of the theorem it is evident that $F_{2}\in L\log L\left(
G\times G\right) $. On the other hand,%
\begin{equation*}
\sup\limits_{j}A_{j}\left( x,x+y\right) \lesssim M_{1}F_{2}\left( x,y\right)
.
\end{equation*}%
Let%
\begin{equation*}
A\left( x,y\right) :=\sup\limits_{j}A_{j}\left( x,y\right) .
\end{equation*}

It is evident that%
\begin{equation}
\iint\limits_{G\times G}A\left( x,y\right) d\mu \left( x,y\right)
=\iint\limits_{G\times G}A\left( x,y+x\right) d\mu \left( x,y\right)
\label{LlogL}
\end{equation}%
\begin{equation*}
\lesssim \iint\limits_{G\times G}M_{1}F_{2}\left( x,y\right) d\mu \left(
x,y\right)
\end{equation*}%
\begin{equation*}
\lesssim 1+\iint\limits_{G\times G}\left\vert F_{2}\left( x,y\right)
\right\vert \log ^{+}\left\vert F_{2}\left( x,y\right) \right\vert d\mu
\left( x,y\right)
\end{equation*}%
\begin{equation*}
\lesssim 1+\iint\limits_{G\times G}\left\vert f\left( x,y\right) \right\vert
\log ^{+}\left\vert f\left( x,y\right) \right\vert d\mu \left( x,y\right) .
\end{equation*}

Then from (\ref{J11-1}) we have
\begin{equation}
\left\vert J_{11}\right\vert \lesssim
\sum\limits_{j_{1}=0}^{n-1}\sum\limits_{j_{2}=0}^{j_{1}}2^{j_{2}-2}
\label{J11}
\end{equation}%
\begin{equation*}
\times \int\limits_{I_{j_{2}}}\left( 2^{n}\int\limits_{I_{n}}A\left(
x,y+t+v+e_{j_{2}}\right) \right) d\left( v\right)
\end{equation*}%
\begin{equation*}
\times \left\vert \sum\limits_{m=0}^{2^{n}-1}\alpha _{mn}\left( x,y\right)
w_{m}\left( t+e_{j_{1}}\right) \right\vert d\mu \left( t\right)
\end{equation*}%
\begin{equation*}
\lesssim
\sum\limits_{j_{1}=0}^{n-1}\sum\limits_{j_{2}=0}^{j_{1}}2^{j_{2}-2}\int%
\limits_{I_{j_{2}}}S_{2^{n}}^{\left( 2\right) }\left(
x,y+t+e_{j_{2}},A\right)
\end{equation*}%
\begin{equation*}
\times \left\vert \sum\limits_{m=0}^{2^{n}-1}\alpha _{mn}\left( x,y\right)
w_{m}\left( t+e_{j_{1}}\right) \right\vert d\mu \left( t\right)
\end{equation*}%
\begin{equation*}
\lesssim
\sum\limits_{j_{1}=0}^{n-1}\int\limits_{G}\sum%
\limits_{j_{2}=0}^{j_{1}}2^{j_{2}-2}\mathbb{I}_{I_{j_{2}}}\left( t\right)
S_{2^{n}}^{\left( 2\right) }\left( x,y+t+e_{j_{2}},A\right)
\end{equation*}%
\begin{equation*}
\times \left\vert \sum\limits_{m=0}^{2^{n}-1}\alpha _{mn}\left( x,y\right)
w_{m}\left( t+e_{j_{1}}\right) \right\vert d\mu \left( t\right)
\end{equation*}%
\begin{equation*}
\lesssim \sum\limits_{j_{1}=0}^{n-1}\left( \int\limits_{G}\left(
\sum\limits_{j_{2}=0}^{j_{1}}2^{j_{2}-2}\mathbb{I}_{I_{j_{2}}}\left(
t\right) S_{2^{n}}^{\left( 2\right) }\left( x,y+t+e_{j_{2}},A\right) \right)
^{2}d\mu \left( t\right) \right) ^{1/2}
\end{equation*}%
\begin{equation*}
\lesssim \sum\limits_{j_{1}=0}^{n-1}2^{j_{1}/2}V_{2}\left( x,y,A\right)
\lesssim 2^{n/2}V_{2}\left( x,y,A\right) ,
\end{equation*}%
where%
\begin{equation*}
A\in L_{1}\left( G\times G\right) .
\end{equation*}%
Analogously, we can prove that%
\begin{equation}
J_{12}\lesssim 2^{n/2}V_{1}\left( x,y,A\right) .  \label{J12}
\end{equation}

Combining (\ref{J1+J2}), (\ref{J11}) and (\ref{J12}) we conclude that%
\begin{equation}
\left\vert J_{1}\right\vert \lesssim 2^{n/2}V_{1}\left( x,y,A\right)
+2^{n/2}V_{2}\left( x,y,A\right) .  \label{J1}
\end{equation}

Analogously, we can prove that%
\begin{equation}
\left\vert J_{2}\right\vert +\left\vert J_{4}\right\vert \lesssim
2^{n/2}V_{1}\left( x,y,A\right) +2^{n/2}V_{2}\left( x,y,A\right) .
\label{J2+J4}
\end{equation}

Combining (\ref{J1-J9}), (\ref{J9})-(\ref{J3}), (\ref{J1}),(\ref{J2+J4}) we
obtain of estimation (\ref{mainest}).

Since%
\begin{equation*}
H_{\ast }^{p}f\leq H_{\ast }^{2}f\text{ \ \ \ }\left( 0<p\leq 2\right) ,
\end{equation*}%
and%
\begin{equation*}
\mu \left\{ Mf>\lambda \right\} \lesssim \frac{\left\Vert f\right\Vert _{1}}{%
\lambda }
\end{equation*}

from (\ref{max1}), (\ref{max2}), (\ref{weakV1}), (\ref{weakV2}), (\ref%
{mainest}), (\ref{LlogL}) and Theorem D we conclude that%
\begin{eqnarray*}
&&\mu \left\{ H_{\ast }^{p}f>\lambda \right\}  \\
&\lesssim &\frac{1}{\lambda }\left( \left\Vert M_{1}f\right\Vert
_{1}+\left\Vert M_{2}f\right\Vert _{1}+\left\Vert A\right\Vert
_{1}+\left\Vert f\right\Vert _{1}\right)  \\
&\lesssim &\frac{1}{\lambda }\left( 1+\iint\limits_{G\times G}\left\vert
f\right\vert \log ^{+}\left\vert f\right\vert \right) .
\end{eqnarray*}

Theorem \ref{strongest} is proved.
\end{proof}


\begin{thebibliography}{99}
\bibitem{Fe} Fejér L., Untersuchungen \"uber Fouriersche Reihen, Math.
Annalen, 58 (1904), 501--569.

\bibitem{FS} Fridli S., Schipp F., Strong summability and Sidon type
inequalities, Acta Sci. Math. (Szeged) 60 (1995), no. 1-2, 277--289.

\bibitem{FS2} Fridli S., Schipp F., Strong approximation via Sidon type
inequalities, J. Approx. Theory 94 (1998), 263--284.

\bibitem{Ga} Gabisonia O. D., On strong summability points for Fourier
series, Mat. Zametki. 5, 14 (1973), 615--626.

\bibitem{GGT} Gát G., Goginava U., Tkebuchava G., Convergence in measure of
logarithmic means of quadratical partial sums of double Walsh-Fourier
series, J. Math. Anal. Appl. 323 (2006), no. 1, 535--549.

\bibitem{Gets} Getsadze R., On the boundedness in measure of sequences of
superlinear operators in classes $L\phi(L)$, Acta Sci. Math. (Szeged) 71
(2005), no. 1-2, 195--226

\bibitem{Gl} Glukhov V. A., Summation of multiple Fourier series in
multiplicative systems (Russian), Mat. Zametki 39 (1986), no. 5, 665--673.

\bibitem{Gogi} Goginava U., The weak type inequality for the maximal
operator of the Marcinkiewicz-Fejér means of the two-dimensional
Walsh-Fourier series, J. Approx. Theory 154 (2008), no. 2, 161--180.

\bibitem{GG} Goginava U., Gogoladze L., Strong approximation by
Marcinkiewicz means of two-dimensional Walsh-Fourier series, Constr. Approx.
35 (2012), no. 1, 1--19.

\bibitem{Gog1} Gogoladze L., On the exponential uniform strong summability
of multiple trigonometric Fourier series, Georgian Math. J. 16 (2009),
517--532.

\bibitem{Gog2} Gogoladze L. D., Strong means of Marcinkiewicz type
(Russian), Soobshch. Akad. Nauk Gruzin. SSR 102 (1981), no. 2, 293--295.

\bibitem{Gog3} Gogoladze L. D., On strong summability almost everywhere
(Russian), Mat. Sb. (N.S.) 135(177) (1988), no. 2, 158--168, 271; translation
in Math. USSR-Sb. 63 (1989), no. 1, 153--16.

\bibitem{GES} Golubov B. I., Efimov A.V., Skvortsov V.A., Series and
transformations of Walsh, Moscow, 1987 (Russian); English translation,
Kluwer Academic, Dordrecht, 1991.

\bibitem{H-L} Hardy G. H., Littlewood J. E., Sur la series de Fourier d'une
fonction a carre sommable, Comptes Rendus (Paris) 156 (1913), 1307--1309.

\bibitem{Ka} Karagulyan G. A., Everywhere divergent $\Phi $-means of Fourier
series (Russian), Mat. Zametki 80 (2006), no. 1, 50--59; translation in
Math. Notes 80 (2006), no. 1-2, 47--56.

\bibitem{Kon} Konyagin S. V., On the divergence of subsequences of partial
sums of multiple trigonometric Fourier series, Trudy MIAN 190 (1989),
102--116.

\bibitem{Le} Lebesgue H., Recherches sur la sommabilite forte des series de
Fourier, Math. Annalen 61 (1905), 251--280.

\bibitem{Le1} Leindler L., \"Uber die Approximation im starken Sinne, Acta
Math. Acad. Hungar, 16 (1965), 255--262.

\bibitem{Le2} Leindler L., On the strong approximation of Fourier series,
Acta Sci. Math. (Szeged) 38 (1976), 317--324.

\bibitem{Le3} Leindler L., Strong approximation and classes of functions,
Mitteilungen Math. Seminar Giessen, 132 (1978), 29--38.

\bibitem{Le4} Leindler L., Strong approximation by Fourier series, Akad\'emiai
Kiad\'o, Budapest, 1985.

\bibitem{Ma} Marcinkiewicz J., Sur la sommabilité forte de séries de
Fourier (French), J. London Math. Soc. 14, (1939).162--168.

\bibitem{Ma2} Marcinkiewicz J., Sur une methode remarquable de sommation des
series doublefes de Fourier, Ann. Scuola Norm. Sup. Pisa, 8 (1939),
149--160.

\bibitem{Os} Oskolkov K. I., Strong summability of Fourier series. (Russian)
Studies in the theory of functions of several real variables and the
approximation of functions, Trudy Mat. Inst. Steklov. 172 (1985), 280--290,
355.

\bibitem{Ro} Rodin, V. A., The space BMO and strong means of Fourier series,
Anal. Math. 16 (1990), no. 4, 291--302.

\bibitem{Ro1} Rodin V. A., BMO-strong means of Fourier series, Funct. anal.
Appl. 23 (1989), 73--74, (Russian)

\bibitem{Sch} Schipp F., On the strong summability of Walsh series,
Publ. Math. Debrecen
52 (1998), no. 3-4, 611--633.

\bibitem{Sch1} Schipp F., \"Uber die starke Summation von Walsh-Fourier
Reihen, Acta Sci. Math. (Szeged), 30 (1969), 77--87.

\bibitem{Sch2} Schipp F., On strong approximation of Walsh-Fourier series,
MTA III. Oszt. Kozl. 19(1969), 101--111 (Hungarian).

\bibitem{Sch3} Schipp F., Ky N. X., On strong summability of polynomial
expansions, Anal. Math. 12 (1986), 115--128.

\bibitem{Sjo} Sj\"olin, P., Convergence almost everywhere of certain singular integrals and multiple Fourier series,
Ark. Mat. 9  (1971), 65--90.

\bibitem{SWS} Schipp F., Wade W., Simon P., Pál P., Walsh Series, an
Introduction to Dyadic Harmonic Analysis, Adam Hilger, Bristol, New York,
1990.

\bibitem{To1} Totik V., on the strong approximation of Fourier series, Acta
Math. Sci. Hungar 35 (1980), 151--172.

\bibitem{To2} Totik V., On the generalization of Fej\'er's summation theorem,
Functions, Series, Operators; Coll. Math. Soc. J. Bolyai (Budapest) Hungary,
35, North Holland, Amsterdam-Oxford-New-Yourk, 1980, 1195--1199.

\bibitem{To3} Totik V., Notes on Fourier series: Strong approximation, J.
Approx. Theory, 43 (1985), 105--111.

\bibitem{Wa} Wang, Kun Yang. Some estimates for the strong approximation of
continuous periodic functions of the two variables by their sums of
Marcinkiewicz type (Chinese), Beijing Shifan Daxue Xuebao 1981, no. 1, 7--22.

\bibitem{We} Weisz F., Strong Marcinkiewicz summability of multi-dimensional
Fourier series, Ann. Univ. Sci. Budapest. Sect. Comput. 29 (2008), 297--317.

\bibitem{We2} Weisz F., Convergence of double Walsh--Fourier series and
Hardy spaces, Approx. Theory Appl. (N.S.) \textbf{17}:2 (2001), 32--44.

\bibitem{Zh} Zhizhiashvili L. V., Generalization of a theorem of
Marcinkiwicz, Izvest.AN\ USSR, ser. matem.  32(1968), 1112--1122
(Russian).

\bibitem{ZhHe} Zhang Y., He X., On the uniform strong approximation of
Marcinkiewicz type for multivariable continuous functions, Anal. Theory
Appl. 21 (2005), 377--384.

\bibitem{Zy2} Zygmund A., Trigonometric series. Cambridge University Press,
Cambridge, 1959.
\end{thebibliography}
\end{document}